\newcommand{\E}{\ensuremath{\mathbb{E}}}
\newcommand{\V}{\ensuremath{\mathrm{Var}}}
\newcommand{\Prob}{\ensuremath{\mathbb{P}}}
\begin{document}
\newtheorem{thm}{Theorem}[section]
\newtheorem{lem}{Lemma}[section]
\newtheorem{cor}[thm]{Corollary}
\newtheorem{rem}{Remark}
\newtheorem{proposition}[thm]{Proposition}

\newcommand{\Rset}{\ensuremath{\mathbb{R}}}
\newcommand{\Nset}{\ensuremath{\mathbb{N}}}
\newcommand{\bo}{\ensuremath{\mathrm{O}}}

\title{\bf Refined Quicksort asymptotics}
\author{Ralph Neininger\\
Institute for Mathematics\\
J.W.~Goethe University Frankfurt\\
60054 Frankfurt am Main\\
Germany\\[2ex]
Email: neiningr@math.uni-frankfurt.de}

\date{January 24, 2013}
\maketitle

\begin{abstract}
The  complexity of the Quicksort algorithm is usually measured by the number of key comparisons used during its execution. When operating on a list of $n$ data, permuted uniformly at random, the appropriately normalized complexity $Y_n$ is known to converge almost surely to a non-degenerate random limit $Y$.  This assumes a natural embedding of all $Y_n$ on  one probability space, e.g.,  via  random binary search trees. In this note a central limit theorem for the error term in the latter almost sure convergence is shown:
$$\sqrt{\frac{n}{2\log n}}(Y_n-Y) \stackrel{d}{\longrightarrow}  {\cal N} \qquad (n\to\infty),$$ 
where ${\cal N}$ denotes a standard normal random variable.
 \end{abstract}

\noindent
{\em  AMS 2010 subject classifications.} 60F05,  60F15, 68P10, 68Q25.  \\
{\em Key words.} Quicksort, complexity, key comparisons, central limit theorem, strong limit theorem, rate of convergence, Zolotarev metric, contraction method.

\section{Introduction and result}
Quicksort, invented by Hoare \cite{Ho62}, is one of the most
widely used algorithms for sorting. Given a list $\Gamma=(u_1,\ldots,u_n)\in\Rset^n$, Quicksort starts picking a key (i.e., an element), say the first one $u_1$, as ``pivot'' element.  The other keys in $\Gamma$  are then partitioned into  lists $\Gamma_\le$ and $\Gamma_>$. Key $u_j$ is contained in list $\Gamma_\le$ if the ``key comparison'' between the pivot element $u_1$ and $u_j$ yields
 $u_j\le u_1$, otherwise $u_j$ is contained in list $\Gamma_>$, $2\le j\le n$. Finally, the lists $\Gamma_\le$ and $\Gamma_>$  are each sorted recursively unless their size is $0$ or $1$. 

The complexity of Quicksort is most commonly  measured by the total number of key comparisons used, although other cost measures have been studied as well. To capture the typical complexity of the algorithm it is usually assumed that the ranks of the elements in $\Gamma$ form a random, uniformly distributed permutation of $\{1,\ldots,n\}$. Subsequently this model assumption is met by starting with the list  $\Gamma=(U_1,\ldots,U_n)$, where $(U_j)_{j\ge 1}$ is a sequence of independent random variables,  identically distributed  with the uniform distribution on $[0,1]$. To be definite about the partitioning phase of the algorithm we assume that the order of elements in $\Gamma$ is preserved within the lists $\Gamma_\le$ and $\Gamma_>$, e.g., list $\Gamma=(4,2,5,6,1,8,3,7)$ is partitioned into the lists $\Gamma_\le=(2,1,3)$ and $\Gamma_>=(5,6,8,7)$. This property is shared by standard implementations when always using the first element as pivot element, see as general reference Mahmoud \cite{Ma00}.

We denote by $K_n$ the number of key comparisons used by Quicksort to sort the list 
$\Gamma=(U_1,\ldots,U_n)$, $n\ge 1$, and set $K_0:=0$. In the probabilistic analysis of the complexity of Quicksort often  characteristics of $K_n$ are studied that only depend  on the distribution ${\cal L}(K_n)$ of $K_n$. With respect to weak convergence  such results are reviewed below. 

However, in the present setting the $K_n$ are constructed on a joint probability space which in fact is a formulation via random binary search trees  discussed in Section \ref{asc} and used in the subsequent analysis. Hence, for $(K_n)_{n\ge 0}$ also path properties (in particular strong limit theorems) 
can be studied.  R\'{e}gnier \cite{re89} showed that   
\begin{align}\label{def_yn}
Y_n:= \frac{K_n -\E[K_n]}{n+1}, \quad n\ge 0,
\end{align}
is a martingale, which converges towards a random, non-degenerate limit $Y$ almost surely and in $L_p$:
\begin{align*}
\|Y_n-Y\|_p \to 0  \quad (n\to \infty),
\end{align*}
for any $1\le p<\infty$, where we denote $\|X\|_p:=\E[|X|^p]^{1/p}$ for a random variable $X$. 
(The case $p=2$ is explicitly discussed in \cite{re89}.) The mean of $K_n$ is  $\E[K_n]=2(n+1)H_n -4n$, where $H_n:=\sum_{k=1}^n 1/k$ denotes the $n$th harmonic number. Another proof  for the almost sure convergence of $(Y_n)_{n\ge 0}$ via a Doob-Martin compactification is given in Gr\"ubel \cite{Gr12}, see also Evans, Gr\"ubel and Wakolbinger \cite{EvGrWa12}.

R\"osler \cite{ro91} gave a  proof based on a contraction argument  for the convergence in distribution of $Y_n$ towards $Y$ and found that the limit distribution ${\cal L}(Y)$ satisfies
\begin{align} \label{def_c_func_a}
{\cal L}(Y) = {\cal L}(UY'+(1-U)Y''+C(U)), 
\end{align}
with, for $x\in[0,1]$,
\begin{align} \label{def_c_func_b}
C(x):=1+2x\log(x)+2(1-x)\log(1-x),
\end{align}
where $U,Y'$ and $Y''$ are independent, $Y'$ and $Y''$ are distributed as $Y$ and $U$ is uniformly distributed on $[0,1]$. 

The rate of the convergence $Y_n\to Y$ has been bounded, regarding the distributions ${\cal L}(Y_n)$ and ${\cal L}(Y)$, by various distance measures. 
The  minimal $L_p$-metric $\ell_p$ is given by 
\begin{align}\label{ell_p}
\ell_p(V,W):= \ell_p({\cal L}(V),{\cal L}(W)):= \inf\{\|V'-W'\|_p: {\cal L}(V)={\cal L}(V'), {\cal L}(W)={\cal L}(W')\},
\end{align}
for all $1\le p<\infty$ and random variables $V$, $W$ with $\|V\|_p, \|W\|_p<\infty$.  Note that the infimum in (\ref{ell_p})
 is over all joint distributions ${\cal L}(V',W')$ with the given marginals ${\cal L}(V)$ and ${\cal L}(W)$. 
 Fill and Janson \cite{FJ02} obtained for all $2\le p<\infty$ the  bounds
\begin{align*}
\ell_p(Y_n,Y)=\bo\left(\frac{1}{\sqrt{n}}\right),\quad  \ell_p(Y_n,Y) = \Omega\left(\frac{\log n}{n}\right),    
\end{align*}
as well as the explicit bound $\ell_2(Y_n,Y)< 2/\sqrt{n}$ for all $n\ge 1$. 

We denote by $F_V$ the distribution function of a random variable $V$. Then, for the Kolmogorov--Smirnov distance 
(uniform distance)
\begin{align*}
\varrho(V,W):=\varrho({\cal L}(V),{\cal L}(W)):= \sup_{x\in\Rset} |F_V(x)-F_W(x)|
\end{align*}
Fill and Janson \cite{FJ02} obtained for all $\varepsilon>0$ that
\begin{align*}
\varrho(Y_n,Y)=\bo\left(n^{\varepsilon-(1/2)}\right),\quad \varrho(Y_n,Y)=\Omega\left(\frac{1}{n}\right)
\end{align*}
together with the explicit lower bound $\varrho(Y_n,Y)\ge 1/(8(n+1))$ for all $n\ge 1$.

For the Zolotarev metric $\zeta_3$ defined in Section \ref{seczo}  Neininger and R\"uschendorf \cite{NR02} obtained the order 
\begin{align}\label{rate_zolo}
\zeta_3(Y_n,Y)=\Theta\left(\frac{\log n}{n}\right).
\end{align}
The techniques of \cite{NR02} are sufficiently sharp to obtain $\zeta_{2+\alpha}(Y_n,Y)=\Theta((\log n)/n)$ for all $\alpha \in (0,1]$ as well. Using inequalities between probability metrics,  based upon  (\ref{rate_zolo}), a couple of  upper bounds  for related distance measures between ${\cal L}(Y_n)$ and ${\cal L}(Y)$ were obtained in Section 3 of  \cite{NR02}.

The results mentioned above bound distances between the distributions of the $Y_n$ 
and $Y$. However, the  embedding of the $K_n$ on one probability space allows to measure the 
approximation given by the martingale convergence $Y_n \to Y$ as well. Very recently, Bindjeme and Fill \cite{BiFi12} started quantifying the almost sure convergence  $Y_n \to Y$ by identifying the $L_2$-distance between $Y_n$ and $Y$ exactly and asymptotically:
\begin{align}\label{var_asy_exp}
\|Y_n-Y\|_2=\left(\frac{1}{n+1}\left( 2H_n + 1 + \frac{6}{n+1} \right) -4\sum_{k=n+1}^\infty \frac{1}{k^2}\right)^\frac{1}{2} \sim \sqrt{\frac{2\log n}{n}}.
\end{align}

In the present note the error term $Y_n-Y$ is further studied with respect to its asymptotic distribution: 
\begin{thm} \label{main_thm}
Let the data $(U_i)_{i\ge 1}$ be a sequence of independent and identically distributed random variables each with the uniform distribution  on $[0,1]$.
For the number $K_n$ of key comparisons needed by the Quicksort algorithm to sort the list 
$(U_1,\ldots,U_n)$ and the almost sure limit $Y$ of $Y_n$ defined in {\rm (\ref{def_yn})}  we have, as $n\to\infty$, that 
\begin{align*}
\sqrt{\frac{n}{2\log n}}\left(Y_n - Y\right) \stackrel{d}{\longrightarrow} {\cal N}.
\end{align*}
\end{thm}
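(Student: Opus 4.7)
The plan is to apply the contraction method with the Zolotarev metric $\zeta_3$, directly comparing the rescaled error $X_n := \sqrt{n/(2\log n)}\,(Y_n - Y)$ to $\mathcal{N}(0,1)$. In the binary-search-tree embedding, let $I_n$ denote the rank of $U_1$ in $(U_1,\ldots,U_n)$, so the left and right subtrees of the root have sizes $I_n - 1$ and $n - I_n$, and set $U_n := I_n/(n+1)$. One has the joint recursion
\[
Y_n = U_n Y_{I_n-1}^{(1)} + (1-U_n) Y_{n-I_n}^{(2)} + T_n,\qquad T_n = \frac{n-1 + a_{I_n-1} + a_{n-I_n} - a_n}{n+1},
\]
with $a_k = \E K_k$, together with the coupled limit decomposition $Y = U_1 Y^{(1)} + (1-U_1) Y^{(2)} + C(U_1)$, where $Y^{(j)}$ is the almost sure limit of $Y_k^{(j)}$ in the corresponding subtree. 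Subtracting and multiplying by $\sqrt{n/(2\log n)}$ yields
\[
X_n = A_n^{(1)} X_{I_n-1}^{(1)} + A_n^{(2)} X_{n-I_n}^{(2)} + B_n,
\]
with $A_n^{(1)} = U_n\sqrt{n\log(I_n-1)/((I_n-1)\log n)}$, an analogous expression for $A_n^{(2)}$, and residual
\[
B_n = \sqrt{\tfrac{n}{2\log n}}\bigl[(U_n - U_1)(Y^{(1)} - Y^{(2)}) + (T_n - C(U_1))\bigr].
\]

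Since $I_n/n \to U_1$ almost surely, $A_n^{(1)} \to \sqrt{U_1}$ and $A_n^{(2)} \to \sqrt{1-U_1}$ a.s. The corresponding limit equation
\[
X \stackrel{d}{=} \sqrt{U}\,\mathcal{N}^{(1)} + \sqrt{1-U}\,\mathcal{N}^{(2)},
\]
with $U$ uniform on $[0,1]$ and $\mathcal{N}^{(j)}$ independent standard normals, is trivially solved by $\mathcal{N}(0,1)$ because $U + (1-U) = 1$ is deterministic; uniqueness among centred, variance-one distributions follows by iteration in $\zeta_3$. The correct limiting variance is pinned down by the exact $L_2$ asymptotic (\ref{var_asy_exp}) of \cite{BiFi12}. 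The asymptotic contraction factor is
\[
\E[U^{3/2}] + \E[(1-U)^{3/2}] = \tfrac{4}{5} < 1,
\]
so a transfer theorem in the spirit of Neininger--R\"uschendorf \cite{NR02} will give $\zeta_3(X_n, \mathcal{N}) \to 0$ as soon as (i) $A_n^{(j)} \to A^{(j)*}$ in $L_3$, (ii) $B_n \to 0$ in $L_3$, and (iii) $\|X_n\|_3$ is uniformly bounded. Convergence in $\zeta_3$ implies the claimed convergence in distribution.

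The main technical obstacle is condition (ii). Writing $T_n - C(U_1) = (T_n - C(U_n)) + (C(U_n) - C(U_1))$, the first difference is a deterministic $O((\log n)/n)$ remainder coming from the asymptotics of $a_k = 2(k+1)H_k - 4k$, at least on the bulk event $\{\varepsilon n \le I_n \le (1-\varepsilon) n\}$. The second difference is more delicate: the derivative $C'(x) = 2\log(x/(1-x))$ diverges at the boundary, so a naive Lipschitz bound fails. I would combine the concentration estimate $\|U_n - U_1\|_p = O(n^{-1/2})$ for each $p$ (using that, conditionally on $U_1$, the variable $I_n-1$ is Binomial$(n-1,U_1)$) with a truncation separating the bulk from the boundary regimes $U_1 \in [0,\delta_n] \cup [1-\delta_n,1]$, chosen to tame both the singularity of $C'$ and the rare event where $I_n \in \{1,n\}$ renders the $A_n^{(j)}$ ill-defined. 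Together with uniform $L_p$-bounds on $Y^{(j)}$ inherited from R\'egnier's martingale convergence, this yields $\|B_n\|_3 \to 0$ after the factor $\sqrt{n/(2\log n)}$ has been absorbed, and the contraction factor $4/5 < 1$ then closes the induction to deliver $\zeta_3(X_n, \mathcal{N}) \to 0$.
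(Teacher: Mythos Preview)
Your overall architecture matches the paper's: rewrite $Y_n-Y$ via the Bindjeme--Fill recursion, scale to $X_n$, identify the limiting fixed-point equation with contraction factor $\E[U^{3/2}+(1-U)^{3/2}]=4/5$, and close with $\zeta_3(X_n,\mathcal N)\to 0$. But there is a genuine gap at the point where you invoke ``a transfer theorem in the spirit of Neininger--R\"uschendorf''. Those theorems require that, conditionally on $I_n$, the pair $(X_{I_n-1}^{(1)},X_{n-I_n}^{(2)})$ be independent of the toll $B_n$. Here that fails: your $B_n$ contains $Y^{(1)}-Y^{(2)}$, while $X_{I_n-1}^{(1)}$ is built from $Y_{I_n-1}^{(1)}-Y^{(1)}$ and $X_{n-I_n}^{(2)}$ from $Y_{n-I_n}^{(2)}-Y^{(2)}$, so both limit variables $Y^{(j)}$ appear in two places. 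The paper flags this explicitly as the reason Corollary~5.2 of \cite{NeRu04} does not apply. Its fix is a dedicated inequality (Lemma~\ref{zolo_est}): for $\zeta_3$-compatible sums,
\[
\zeta_3(V_1+W_1,V_2+W_2)\le \zeta_3(V_1,V_2)+\sum_{i=1}^2\Bigl(\tfrac12\|V_i\|_3^2\|W_i\|_3+\tfrac12\|V_i\|_3\|W_i\|_3^2+\tfrac16\|W_i\|_3^3\Bigr),
\]
which replaces the standard additivity bound when independence is absent. One applies this with $V_1=A_n^{(1)}X_{I_n-1}^{(1)}+A_n^{(2)}X_{n-I_n}^{(2)}$, $W_1=B_n$, and a variance-adjusted hybrid $V_2+W_2=(1+\kappa_n)(A_n^{(1)}\mathcal N^{(1)}+A_n^{(2)}\mathcal N^{(2)})$; only after this step does conditioning on $I_n$ and the $(3,+)$-ideal property give the clean recursion you need.

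Two smaller points. First, normalizing by $\sqrt{n/(2\log n)}$ rather than the exact $\sigma(n)=\|Y_n-Y\|_2$ leaves $\V(X_n)\ne 1$, so $(X_n,\mathcal N)$ is not $\zeta_3$-compatible and $\zeta_3(X_n,\mathcal N)$ need not even be finite; the paper uses $\sigma(n)$ and introduces a correction factor $\kappa_n\to 0$ on the hybrid side for the same reason. Second, your condition~(iii), $\sup_n\|X_n\|_3<\infty$, is not for free: it is established in the paper (Lemma~\ref{lem_l3}) by an induction on the same recursion, again requiring care with the $B_n$-dependence. Your treatment of $\|B_n\|_3\to 0$ is essentially the paper's Lemma~\ref{bn_con} and is on the right track.
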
 
The methods used for the proof of Theorem \ref{main_thm} also imply convergence of the third absolute moments, which yields an asymptotic expression for the $L_3$-distance between $Y_n$ and $Y$: 
\begin{cor}\label{coro}
For the normalized number $Y_n$ of key comparisons needed by the Quicksort algorithm and its almost sure limit $Y$  as in Theorem \ref{main_thm} we have, as $n\to\infty$, that
\begin{align*}
\left\|Y_n - Y\right\|_3 \sim  \frac{2}{\pi^{1/6}} \sqrt{\frac{\log n}{ n}}. 
\end{align*}
\end{cor}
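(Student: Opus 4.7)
The plan is to strengthen the weak convergence provided by Theorem~\ref{main_thm} into convergence of third absolute moments. Write $Z_n := \sqrt{n/(2\log n)}\,(Y_n-Y)$, so that Theorem~\ref{main_thm} reads $Z_n \stackrel{d}{\longrightarrow} \mathcal{N}$. A direct computation gives $\E|\mathcal{N}|^3 = 2\sqrt{2/\pi}$, hence $(\E|\mathcal{N}|^3)^{1/3} = \sqrt{2}/\pi^{1/6}$. Once the convergence $\|Z_n\|_3 \to (\E|\mathcal{N}|^3)^{1/3}$ is known, rescaling gives
\[
\|Y_n-Y\|_3 = \sqrt{\frac{2\log n}{n}}\,\|Z_n\|_3 \sim \sqrt{\frac{2\log n}{n}}\cdot\frac{\sqrt{2}}{\pi^{1/6}} = \frac{2}{\pi^{1/6}}\sqrt{\frac{\log n}{n}},
\]
which is the claimed asymptotic.

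By the Vitali theorem, the convergence $\E|Z_n|^3 \to \E|\mathcal{N}|^3$ follows from weak convergence together with uniform integrability of $(|Z_n|^3)_{n\ge 1}$; for the latter it suffices to prove $\sup_{n\ge 1}\E|Z_n|^{3+\delta}<\infty$ for some $\delta > 0$. I would obtain this moment bound as a by-product of the proof of Theorem~\ref{main_thm}. That proof is presumably driven by the Quicksort splitting recursion, which on the scale of $Z_n$ takes the form $Z_n \stackrel{d}{=} A_n^{(1)} Z'_{I_n} + A_n^{(2)} Z''_{n-1-I_n} + R_n$ with independent copies, scaling factors $A_n^{(1)},A_n^{(2)}$ tending to $\sqrt{U},\sqrt{1-U}$ for a uniform splitter $U$, and a toll term $R_n$. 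The Zolotarev metric $\zeta_s$ contracts this recursion strictly whenever $s>2$, since
\[
\E[U^{s/2}+(1-U)^{s/2}] = \frac{4}{s+2} < 1.
\]
Running the same estimate at Zolotarev order $s=3+\delta$ instead of $s=3$ therefore yields $\sup_n \zeta_{3+\delta}(Z_n,\mathcal{N}_n) < \infty$, with $\mathcal{N}_n$ a Gaussian matching the first two moments of $Z_n$, from which $\sup_n \|Z_n\|_{3+\delta} < \infty$ follows.

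The main obstacle is the moment matching required by $\zeta_s$. One has $\E Z_n = 0$ automatically, but $\V(Z_n)$ equals $1$ only asymptotically, through~(\ref{var_asy_exp}), and not exactly for finite $n$. The standard remedy is to couple $Z_n$ with a centred Gaussian $\mathcal{N}_n$ of variance $\V(Z_n)$; the $o(1)$-discrepancy between $\V(Z_n)$ and $1$ feeds into the toll $R_n$ as a vanishing perturbation which, thanks to the strict contraction constant, is absorbed uniformly in $n$. Control of $R_n$ itself, originating both from the difference between the finite-$n$ splitters and $(\sqrt{U},\sqrt{1-U})$ and from the residual shift associated with the function $C$ of~(\ref{def_c_func_b}), is the most delicate ingredient, but should parallel exactly the analogous control carried out for Theorem~\ref{main_thm} at order $s=3$. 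Combined with the distributional convergence of Theorem~\ref{main_thm}, the resulting uniform integrability of $(|Z_n|^3)$ yields convergence of third absolute moments and therefore the corollary.
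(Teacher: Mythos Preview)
Your overall strategy---weak convergence plus uniform integrability of $(|Z_n|^3)$---is sound, but the route you propose for uniform integrability has a genuine gap. For the Zolotarev metric $\zeta_s$ with $s=3+\delta$ (so $m=3$, $\alpha=\delta$), finiteness of $\zeta_{3+\delta}(Z_n,\mathcal{N}_n)$ requires the first \emph{three} moments of $Z_n$ and $\mathcal{N}_n$ to agree, not just the first two. A centred Gaussian has third moment zero, whereas $\E[Z_n^3]$ has no reason to vanish; hence $\zeta_{3+\delta}(Z_n,\mathcal{N}_n)=\infty$ in general, and the contraction argument at order $3+\delta$ cannot even be started with a Gaussian comparison. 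One could try to repair this by matching against a non-Gaussian family with the correct third moment, but that is substantially more work than you suggest.

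The paper's proof avoids the uniform integrability issue altogether and is much shorter. In proving Theorem~\ref{main_thm} the paper actually establishes $\zeta_3(X_n,\mathcal{N})\to 0$ for $X_n:=(Y_n-Y)/\sigma(n)$ (normalised by the \emph{exact} standard deviation, so that $\V(X_n)=1$ and the pair is $\zeta_3$-compatible). Since $x\mapsto |x|^3/6$ belongs to the test class $\mathcal{F}_3$, the definition of $\zeta_3$ immediately gives $\E|X_n|^3\to\E|\mathcal{N}|^3$, and rescaling by $\sigma(n)\sim\sqrt{2\log n/n}$ finishes the corollary in two lines. If you prefer to stick with your approach, note that the needed moment bound $\sup_n\|X_n\|_{3+\delta}<\infty$ (indeed for every finite exponent) does hold, but it comes from a direct $L_p$ recursion in the spirit of Lemma~\ref{lem_l3} and the remark following it, not from a $\zeta_{3+\delta}$ argument.
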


\noindent
{\bf Notation.} Throughout, by $\stackrel{d}{\longrightarrow}$ convergence in distribution is denoted,  by ${\cal N}$ a random variable with the standard normal distribution. The Bachmann--Landau symbols are used in asymptotic statements. We denote by $\Nset$ the positive integers and let $\Nset_0:=\Nset \cup \{0\}$. By $\mathrm{B}(n,u)$ the binomial distribution with $n\in\Nset$ trials and success probability $u\in[0,1]$ is denoted, and by $\log x$ the natural logarithm of $x$ for $x>0$. Further $x\log x:=0$ for $x=0$ is used.

\subsubsection*{Acknowledgment} I thank Henning Sulzbach and Kevin Leckey for comments on a draft of this note and three anonymous referees for their careful reading and constructive remarks.

\section{Proof} \label{sec2}
The outline of the proof is as follows:
First, in Section \ref{asc} an explicit construction of $Y_n$ and $Y$ is recalled, which leads to a sample-pointwise recurrence relation for $Y_n-Y$. Then ideas from the contraction method are
 used for this recurrence. Compared to standard applications of the contraction method, mainly  additional dependencies between the arising random variables need  to be controlled, see the discussion at the end of Section \ref{asc}. This is done by use of inequalities for the Zolotarev metric, provided in Section \ref{seczo}. After technical preparations in Section \ref{seclem} the convergence in  Theorem \ref{main_thm} then is shown in Section \ref{pfpf} within the Zolotarev metric, which implies the stated convergence in distribution.
\subsection{Almost sure construction} \label{asc}
An explicit construction for the limit distribution ${\cal L}(Y)$  was given by R\"osler \cite{ro91} and recently linked  to the martingale limit $Y$ by Bindjeme and Fill \cite{BiFi12}. Since below, as a starting point, the same recursive equation (\ref{rec_bifi}) for $Y_n-Y$ is used as in  \cite{BiFi12}, for the reader's convenience, some notation is adopted from there. 

Consider the rooted complete infinite binary tree, where the root is labeled by the empty word $\epsilon$ and left and right children of a node labeled $\vartheta$ are labeled by the extended words $\vartheta 0$ and $\vartheta 1$ respectively. The set of labels is denoted by $\Theta:=\cup_{k=0}^\infty \{0,1\}^k$. The length $|\vartheta|$ of a label of a node is identical to the depth of the node in the rooted complete infinite binary tree. Now the sequence of keys $(U_j)_{j\ge 1}$ is inserted into the rooted  infinite binary tree according to the binary search tree algorithm: The first key $U_1$ is inserted in the root and occupies the root. Then we successively insert the following keys, where each key traverses the occupied nodes starting at the root. Whenever the key traversing is less than the occupying key at a node it moves on to the left child of that node, otherwise to its right child. The first empty node visited is occupied by the key. General references for search tree algorithms are Knuth \cite{Kn98} or Mahmoud \cite{Ma92}.
We denote by $V_\vartheta$ the key which occupies the node labeled $\vartheta$. In particular we have $V_\epsilon=U_1$. 

Further, we associate with each node labeled $\vartheta$ an interval $I_\vartheta$ defined recursively: We set $I_\epsilon:=[0,1]$. Assume that $I_\vartheta=[L_\vartheta,R_\vartheta]$ is already defined for some $\vartheta\in \Theta$. Then we set $I_{\vartheta 0}:=[L_\vartheta,V_\vartheta]$ and $I_{\vartheta 1}:=[V_\vartheta,R_\vartheta]$. Note that  by construction we always have $V_\vartheta \in I_\vartheta$.  The relative positions of $V_\vartheta$ within $I_\vartheta$ are crucial: We denote the interval lengths by $\varphi_\vartheta:= R_\vartheta-L_\vartheta$ and
\begin{align*}
\Upsilon_\vartheta:= \frac{V_\vartheta-L_\vartheta}{R_\vartheta-L_\vartheta}=\frac{\varphi_{\vartheta 0}}{\varphi_\vartheta}, \quad \vartheta\in\Theta.
\end{align*}
By construction, $(\Upsilon_\vartheta)_{\vartheta\in\Theta}$ is a family of independent random variables, identically distributed with the uniform distribution on $[0,1]$. 
Furthermore, with the function $C$ defined in (\ref{def_c_func_b}) we set 
\begin{align*}
G_\vartheta:= \varphi_\vartheta C(\Upsilon_\vartheta).
\end{align*}
In a related setting R\"osler \cite{ro91} showed that the series
\begin{align}\label{limit_series}
\sum_{j=0}^\infty \sum_{\vartheta\in \Theta: |\vartheta|=j} G_\vartheta
\end{align}
is convergent in any $L_p$ with $p<\infty$ and that it
has the same distribution as the martingale limit $Y$. 

Bindjeme and Fill \cite{BiFi12} showed that the random variable in (\ref{limit_series}) is even almost surely identical to $Y$. 
Moreover, they use the latter construction to give the following sample-pointwise extension of the distributional identity (\ref{def_c_func_a}) for $Y$: Roughly, the left and right subtree of the root, i.e., the complete infinite binary trees rooted at the nodes labeled $0$ and $1$ get all their nodes' interval lengths renormalized by $1/U_1$ and $1/(1-U_1)$ respectively. This unwinds the original dependence between interval lengths from nodes of the left and right subtree induced from $U_1$ and allows an almost sure construction of the distributional identity (\ref{def_c_func_a}). Formally,  
with the root key $V_\epsilon=U_1$, we define for all $\vartheta\in\Theta$ random variables 
\begin{align*}
\varphi^{(0)}_\vartheta := \frac{1}{U_1}\varphi_{0\vartheta}=\frac{\varphi_{0\vartheta}}{\varphi_{0}}   ,\quad 
\varphi^{(1)}_\vartheta := \frac{1}{1-U_1}\varphi_{1\vartheta}=\frac{\varphi_{1\vartheta}}{\varphi_{1}},
\end{align*}
and
\begin{align*}
G^{(i)}_\vartheta :=\varphi^{(i)}_\vartheta C(\Upsilon_{i\vartheta}), \quad 
Y^{(i)}:= \sum_{j=0}^\infty \sum_{\vartheta\in \Theta: |\vartheta|=j} G^{(i)}_\vartheta,\quad i\in\{0,1\}.
\end{align*}
Then, cf.~Proposition 2.1 in \cite{BiFi12}, we have 
\begin{align}\label{rec_y}
Y= U_1 Y^{(0)} + (1-U_1)Y^{(1)} + C(U_1),
\end{align}
and $U_1$, $Y^{(0)}$, $Y^{(1)}$ are independent and $Y^{(0)}$ and  $Y^{(1)}$ have the same distribution as $Y$.

Now, we denote by $I_n$ the number of keys among $U_1,\ldots,U_n$ that are inserted in the left subtree of the root, i.e., the subtree rooted at the node labeled $0$. Note that $I_n$ takes values in $\{0,\ldots,n-1\}$ and, conditional on $U_1=u$, we have for $I_n$ the binomial $\mathrm{B}(n-1,u)$ distribution. Furthermore, denote by $K_{n,0}$ and $K_{n,1}$ the number of key comparisons used to sort the left and right sublists $\Gamma_\le$ and $\Gamma_>$ generated when first partitioning $(U_1,\ldots,U_n)$. 
Note that the sizes of  $\Gamma_\le$ and $\Gamma_>$ are $I_n$ and $n-1-I_n$, respectively. Since the first partitioning phase of Quicksort requires $n-1$ key comparisons we have for all $n\ge 1$ that
\begin{align}\label{def_xn}
K_n = K_{n,0} + K_{n,1} + n-1.
\end{align}
Recall the normalization (\ref{def_yn}) for $K_n$. Hence, with $\mu(n):=\E[K_n]$ we define  normalizations of $K_{n,0}$ and $K_{n,1}$ by
\begin{align}\label{def_scal}
Y_{n,0} := \frac{K_{n,0} - \mu(I_n)}{I_n+1}, \quad  Y_{n,1} := \frac{K_{n,1} - \mu(n-1-I_n)}{n-I_n}.
\end{align}
(To be clear about the notation, we have $\mu(I_n)=\E[K_{I_n}\,|\, I_n]$ and in general  $\mu(I_n)\neq \E[K_{I_n}]$.)
Note that conditional on $I_n=j$ we have that $Y_{n,0}$ and $Y_{n,1}$ are independent and have the same distributions as $Y_j$ and $Y_{n-1-j}$, respectively.
 From (\ref{def_yn}), (\ref{def_xn}) and (\ref{def_scal})
we obtain the (sample-pointwise) recurrence, cf.~equation (2.4) in \cite{BiFi12},
\begin{align}\label{rec_yn}
Y_n= \frac{I_n+1}{n+1}Y_{n,0}+ \frac{n-I_n}{n+1} Y_{n,1} + \frac{n}{n+1}C_n(I_n+1), \quad n\ge 1,
\end{align}
where, for $1\le i\le n$ we define
\begin{align*}
C_n(i):=\frac{1}{n}\left(\mu(i-1)+\mu(n-i)-\mu(n)+n-1\right).
\end{align*}
Altogether, (\ref{rec_y}) and (\ref{rec_yn}) yield a recurrence for the error term under consideration in Theorem \ref{main_thm}, for all $n\ge 1$, cf.~equation (2.6) in \cite{BiFi12}, 
\begin{align}
Y_n -Y &= \frac{I_n+1}{n+1}\left(Y_{n,0}-Y^{(0)}\right) + 
\frac{n-I_n}{n+1}\left(Y_{n,1}-Y^{(1)}\right) + \left(\frac{I_n+1}{n+1} - U_1\right) Y^{(0)} \nonumber \\
&\;\;\;~
+ \left(\frac{n-I_n}{n+1} -(1-U_1)\right) Y^{(1)}+\frac{n}{n+1}C_n(I_n+1)-C(U_1). \label{rec_bifi}
\end{align}

Note that $Y_n -Y$ is already centered and has variance, see (\ref{var_asy_exp})
\begin{align} \label{var_asy}
\sigma^2(n):=\V(Y_n -Y) = \|Y_n-Y\|_2^2 \sim \frac{2\log n}{n} \quad (n\to\infty),
\end{align}
and $\sigma(n)>0$ for all $n\ge 0$.
Hence, with the scaling
\begin{align} \label{scaling}
X_n:=\frac{Y_n -Y}{\sigma(n)}, \quad n\ge 0,
\end{align}
we obtain for all $n\ge 1$ that 
\begin{align} \label{basic_eqn}
X_n= A_0^{(n)} \frac{1}{\sigma(I_n)}\left(Y_{n,0}-Y^{(0)}\right) 
+ A_1^{(n)}\frac{1}{\sigma(n-1-I_n)}\left(Y_{n,1}-Y^{(1)}\right) + b^{(n)},
\end{align}
where
\begin{align}
A_0^{(n)}&:= \frac{(I_n+1)\sigma(I_n)}{(n+1)\sigma(n)}, \qquad    
A_1^{(n)}:= \frac{(n-I_n)\sigma(n-1-I_n)}{(n+1)\sigma(n)}, \nonumber\\
b^{(n)}&:= \frac{1}{\sigma(n)}\left[\left(\frac{I_n+1}{n+1} - U_1\right) Y^{(0)}
+ \left(\frac{n-I_n}{n+1} -(1-U_1)\right) Y^{(1)} \nonumber \right.\\
&\left. \phantom{\frac{1}{\sigma(n)}}\quad\quad ~ +\frac{n}{n+1}C_n(I_n+1) -C(U_1)   \right].
\label{def_bn}
\end{align}
The asymptotics of $\sigma(n)$ in (\ref{var_asy}) and the fact that $I_n$, conditionally on $U_1=u$, has the binomial $\mathrm{B}(n-1,u)$ distribution imply together with the strong law of large numbers and dominated convergence that,
 as $n\to\infty$,
\begin{align} \label{asy_norm}
\left\|A_0^{(n)}-\sqrt{U_1}\right\|_p \to 0, \quad   \left\|A_1^{(n)}-\sqrt{1-U_1}\right\|_p \to 0 
\end{align}
for all $1\le p<\infty$.\\

\noindent
{\bf Remark.} Note that  convergence theorems from the contraction method, e.g., Corollary 5.2 in  \cite{NeRu04}, do not in general apply to recurrence (\ref{basic_eqn}). The reason is that  each of the random variables 
\begin{align*}
\frac{1}{\sigma(I_n)}\left(Y_{n,0}-Y^{(0)}\right), \quad  
\frac{1}{\sigma(n-1-I_n)}\left(Y_{n,1}-Y^{(1)}\right)
\end{align*}
is conditionally on $I_n$ still (stochastically) dependent on $b^{(n)}$,  via the joint occurrence 
of $Y^{(0)}$ and $Y^{(1)}$ respectively, while in typical theorems from the contraction method  conditional independence is assumed.

\subsection{The Zolotarev metric}\label{seczo}
The proof of Theorem \ref{main_thm} in Section \ref{pfpf} is based on showing appropriate convergence within the Zolotarev metric and using that convergence in the Zolotarev metric implies weak convergence. The Zolotarev metric  has been 
studied in the context of distributional recurrences systematically in \cite{NeRu04}. We  collect the properties that are used subsequently, which can be found in Zolotarev \cite{zo76,zo77} if not stated otherwise.
For distributions 
${\cal L}(V)$, ${\cal L}(W)$ on $\Rset$ the Zolotarev distance $\zeta_s$, $s>0$, is defined by

\begin{equation}
\label{eq:3.6_app} \zeta_s(V,W) := \zeta_s({\cal L}(V),{\cal L}(W)):=\sup_{f\in {\cal F}_s}|\E[f(V) -
f(W)]|
\end{equation}
where $s=m+\alpha$ with $0<\alpha\le 1$ and
$m\in\Nset_0$. Here
\begin{equation} 
{\cal F}_s:=\{f\in
C^m(\Rset,\Rset):|f^{(m)}(x)-f^{(m)}(y)|\le
|x-y|^\alpha\}
\end{equation}
denotes  the space of $m$-times
continuously differentiable functions from
$\Rset$ to $\Rset$ such that the $m$-th
derivative is H\"older continuous of order
$\alpha$ with H\"older-constant $1$. 
We have that $\zeta_s(V,W)<\infty$ if (i) all 
moments of orders $1,\ldots,m$ of $V$ and $W$ are
equal and (ii) the $s$-th absolute moments of $V$ and
$W$ are finite.  Since later on only the case $s=3$ is
used, for finiteness of  $\zeta_3(V,W)$ it is thus sufficient 
 that  mean and variance of 
$V$ and $W$ coincide and both have a finite absolute moment of
 order $3$. A pair $(V,W)$ satisfying these moment assumptions subsequently is called
 {\em $\zeta_3$-compatible}, a term  not in use elsewhere. In particular, for fixed $\mu \in \Rset$ and $\sigma>0$, within the space of distributions
\begin{align*}
{\cal M}_3(\mu,\sigma^2):=\{ {\cal L}(V)\,:\, \E[V]=\mu, \V(V)=\sigma^2, \E[|V|^3]<\infty\}
\end{align*}
all pairs are $\zeta_3$-compatible and $({\cal M}_3(\mu,\sigma^2), \zeta_3)$ is a complete metric space. For the completeness (not used subsequently) see  \cite[Theorem 5.1]{DrJaNe08}.
 Convergence
in $\zeta_3$ implies weak convergence on $\Rset$.

Furthermore,  $\zeta_3$ is $(3,+)$ ideal, i.e., 
\begin{eqnarray*}
\zeta_3(V+Z,W+Z)\le\zeta_3(V,W), \quad  \zeta_3(cV,cW) = c^3 \zeta_3(V,W)
\end{eqnarray*}
for all  $Z$ being independent of $(V,W)$ and all $c>0$.
This in particular implies that for independent pairs $(V_1,V_2)$, $(W_1,W_2)$ such that both pairs are $\zeta_3$-compatible we have 
\begin{align}\label{sum_bound}
\zeta_3(V_1+W_1,V_2+W_2)&\le \zeta_3(V_1+W_1,V_2+W_1) + \zeta_3(V_2+W_1,V_2+W_2) \nonumber \\
&\le \zeta_3(V_1,V_2) + \zeta_3(W_1,W_2).
\end{align}
The metric $\zeta_3$ can be upper-bounded in terms of  the minimal $L_3$-metric $\ell_3$ defined in (\ref{ell_p}):
For $\zeta_3$-compatible $(V,W)$ we have, see  \cite[Lemma 2.1]{NR02},
\begin{align}\label{est_zeta_ell}
\zeta_3(V,W)\le \frac{1}{2}\left(\|V\|_3^2+ \|V\|_3\|W\|_3 + \|W\|_3^2\right)\ell_3(V,W).
\end{align}

Finally, a substitute for  (\ref{sum_bound}) when the independence  assumption there is violated is later used: 
\begin{lem} \label{zolo_est}
Let $V_1,V_2,W_1,W_2$ be random variables such that $(V_1,V_2)$ is $\zeta_3$-compatible and $(V_1+W_1,V_2+W_2)$  is $\zeta_3$-compatible. Then we have
\begin{align}
\zeta_3(V_1+W_1, V_2+W_2)  \le \zeta_3(V_1, V_2)+ \sum_{i=1}^2 \left\{ \frac{\|V_i\|_3^2\|W_i\|_3}{2}+   \frac{\|V_i\|_3\|W_i\|_3^2}{2} +  \frac{\|W_i\|_3^3}{6} \right\}.
\end{align}
\end{lem}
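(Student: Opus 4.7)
The plan is to estimate $\sup_{f\in\mathcal{F}_3}|\E[f(V_1+W_1)-f(V_2+W_2)]|$ via a telescoping decomposition combined with Taylor expansion to order three. I start with a normalization of the test functions: for any $f\in\mathcal{F}_3$, adding a polynomial of degree at most two preserves membership in $\mathcal{F}_3$ (since $f''$ is only shifted by a constant), and by the $\zeta_3$-compatibility of $(V_1+W_1,V_2+W_2)$, which forces the first two moments of the two sums to agree, such shifts do not affect $\E[f(V_1+W_1)]-\E[f(V_2+W_2)]$. Hence, I may restrict the supremum to $f\in\mathcal{F}_3$ with $f(0)=f'(0)=f''(0)=0$. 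Integrating the H\"older condition on $f''$ twice then yields the pointwise bounds $|f''(x)|\le|x|$ and $|f'(x)|\le x^2/2$.

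Next I telescope
\begin{align*}
\E[f(V_1+W_1)-f(V_2+W_2)] = \E[f(V_1)-f(V_2)] + \E[f(V_1+W_1)-f(V_1)] - \E[f(V_2+W_2)-f(V_2)].
\end{align*}
The first summand is at most $\zeta_3(V_1,V_2)$ by definition, using the $\zeta_3$-compatibility of $(V_1,V_2)$. For each $i\in\{1,2\}$ I pointwise Taylor expand
\begin{align*}
f(V_i+W_i)-f(V_i) = f'(V_i)\,W_i + \tfrac{1}{2}f''(V_i)\,W_i^2 + R_i,
\end{align*}
where the Lagrange form of the remainder together with the 1-Lipschitz bound on $f''$ yields $|R_i|\le |W_i|^3/6$.

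Each of the three resulting expectations is then controlled by H\"older's inequality with conjugate exponents $3/2$ and $3$, combined with the pointwise bounds on $f'$ and $f''$ obtained in the normalization step:
\begin{align*}
|\E[f'(V_i)W_i]| &\le \tfrac{1}{2}\,\E[V_i^2|W_i|] \le \tfrac{1}{2}\,\|V_i\|_3^2\,\|W_i\|_3,\\
\tfrac{1}{2}|\E[f''(V_i)W_i^2]| &\le \tfrac{1}{2}\,\E[|V_i|W_i^2] \le \tfrac{1}{2}\,\|V_i\|_3\,\|W_i\|_3^2,\\
|\E[R_i]| &\le \tfrac{1}{6}\,\|W_i\|_3^3.
\end{align*}
Crucially, these bounds require no independence between $V_i$ and $W_i$, which is precisely why this lemma is useful in place of (\ref{sum_bound}) in the subsequent analysis. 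Summing over $i\in\{1,2\}$ and taking the supremum over $f$ in the reduced class gives exactly the stated inequality. The one delicate point is the opening normalization: the $\zeta_3$-compatibility hypothesis has to be applied to the shifted pairs $(V_1+W_1,V_2+W_2)$, not to $(V_1,V_2)$, since it is the functional on the shifted pairs that must be invariant under polynomial shifts of $f$ in order to discard linear and quadratic parts. Once this reduction is in hand, the remaining work is routine Taylor-plus-H\"older bookkeeping.
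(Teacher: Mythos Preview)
Your proof is correct and follows essentially the same route as the paper's: normalize the test functions so that $f'(0)=f''(0)=0$ (using the $\zeta_3$-compatibility of the \emph{sums}), Taylor-expand $f(V_i+W_i)$ about $V_i$, bound $|f''(x)|\le|x|$ and $|f'(x)|\le x^2/2$, and finish with H\"older. One tiny terminological quibble: the Lagrange form of the remainder presupposes a third derivative, which $f\in\mathcal{F}_3$ need not have; the clean way to get $|R_i|\le|W_i|^3/6$ is the integral remainder $R_i=\int_0^{W_i}(W_i-t)\bigl(f''(V_i+t)-f''(V_i)\bigr)\,dt$ together with the Lipschitz bound on $f''$, which is exactly what the paper invokes.
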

\begin{proof}
By the assumptions on $\zeta_3$-compatibility we have that the $\zeta_3$-distances appearing in the formulation of the Lemma are finite. 
First note that for all $f \in {\cal F}_3$ and  $g$ defined by $g(x):=f(x)-f'(0)x-f''(0)x^2/2$ for $x\in \Rset$ we have for all $\zeta_3$-compatible pairs $(V,W)$ that 
$\E[f(V)-f(W)]=\E[g(V)-g(W)]$. Since $g'(0)=g''(0)=0$ we hence have 
\begin{align*}
\zeta_3(V,W)= \sup_{f\in{\cal F}_3}| \E[f(V)-f(W)]| = \sup_{g\in{\cal F}^*_3}| \E[g(V)-g(W)]|,
\end{align*}
with ${\cal F}^*_3:=\{g\in {\cal F}_3 : g'(0)=g''(0)=0\}$.

For $g\in{\cal F}^*_3$ we have the Taylor expansion 
$g(x+h)=g(x)+g'(x)h+g''(x)h^2/2+R(x,h)$ for all $x,h\in\Rset$ with, using the remainder in integral form, $|R(x,h)|\le |h|^3/6$. Hence, with $V_1,W_1, V_2,W_2$ as in the statement of the Lemma we obtain
\begin{align*}
\zeta_3(V_1+W_1, V_2+W_2)&=  \sup_{g\in{\cal F}^*_3}| \E[g(V_1+W_1)-g(V_2+W_2)]|\\
&=\sup_{g\in{\cal F}^*_3}\left|\E\left[g(V_1)+g'(V_1)W_1+ \frac{g''(V_1)W_1^2}{2}+R(V_1,W_1)   \right.\right.\\
&\left.\left. \phantom{\sup_{g\in{\cal F}^*_3}|\E[}\quad
 - \left(g(V_2)+g'(V_2)W_2+ \frac{g''(V_2)W_2^2}{2}+R(V_2,W_2)\right)\right]\right|\\
 &\le \zeta_3(V_1, V_2) + B,
\end{align*}
with 
\begin{align*}
B&:= \sup_{g\in{\cal F}^*_3}\left|\E\left[g'(V_1)W_1
+ \frac{g''(V_1)W_1^2}{2}+R(V_1,W_1)   \right.\right.\\
 &\left.\left. \phantom{\sup_{g\in{\cal F}^*_3}|\E[}\quad - \left(g'(V_2)W_2+ \frac{g''(V_2)W_2^2}{2}+R(V_2,W_2)\right)\right]\right|\\
&\le  \sup_{g\in{\cal F}^*_3} \sum_{i=1}^2 \left\{ |\E[g'(V_i)W_i]| + \frac{|\E[g''(V_i)W_i^2]|}{2} +\frac{\E[|W_i|^3]}{6} \right\}.
\end{align*}
Since $g'(0)=g''(0)=0$ and $g''$ is Lipschitz-continuous with Lipschitz-constant $1$ we obtain for all $x\in\Rset$ that 
$|g''(x)|=|g''(x)-g''(0)| \le |x|$ and, integrating this inequality, that $|g'(x)|\le  x^2/2$. Hence we obtain 
\begin{align*}
B&\le   \sum_{i=1}^2 \E\left[\frac{V_i^2|W_i|}{2}+\frac{|V_i|W_i^2}{2} +  \frac{|W_i|^3}{6}\right].
\end{align*}
H\"older's inequality implies the assertion.
\end{proof}

\subsection{Two more technical Lemmata}\label{seclem}
The proof of Theorem \ref{main_thm} in Section \ref{pfpf}  requires that $b^{(n)}$ defined in (\ref{def_bn}) tends to $0$ in the $L_3$-norm.   The following Lemma  provides a quantitative estimate.
\begin{lem}\label{bn_con}
For $b^{(n)}$ defined in {\rm (\ref{def_bn})} we have, as $n\to\infty$,
\begin{align}
\| b^{(n)}\|_3 =\bo\left(\frac{1}{\sqrt{\log n}}\right).
\end{align}
\end{lem}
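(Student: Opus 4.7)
The plan is to split $b^{(n)}$ into its three natural summands, apply the triangle inequality for $\|\cdot\|_3$, and show that each piece, divided by $\sigma(n)$, is $\bo(1/\sqrt{\log n})$. Since $1/\sigma(n)\sim\sqrt{n/(2\log n)}$, it suffices to prove
$$\|T_i\|_3=\bo(n^{-1/2}),\qquad i\in\{1,2,3\},$$
where $T_1:=((I_n+1)/(n+1)-U_1)Y^{(0)}$, $T_2:=((n-I_n)/(n+1)-(1-U_1))Y^{(1)}$ and $T_3:=(n/(n+1))C_n(I_n+1)-C(U_1)$.

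For $T_1$ (and symmetrically $T_2$) I will exploit the independence structure of the tree. Since $Y^{(0)}$ is a measurable function of the subfamily $(\Upsilon_{0\vartheta})_{\vartheta\in\Theta}$, which is independent of $\Upsilon_\epsilon=U_1$, and since conditional on $U_1=u$ the infinite sequence of renormalized left-subtree keys is i.i.d.\ uniform on $[0,1]$ with a law that does not depend on $(u,I_n)$, the variable $Y^{(0)}$ is independent of the pair $(U_1,I_n)$. Factorization of the $L_3$-norm under independence then yields $\|T_1\|_3=\|(I_n+1)/(n+1)-U_1\|_3\cdot\|Y\|_3$, and the first factor is $\bo(n^{-1/2})$ via a direct computation of the third absolute central moment of $\mathrm{B}(n-1,U_1)$, integrated against the uniform law of $U_1$.

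For $T_3$ I will substitute $\mu(k)=2(k+1)H_k-4k$ into the definition of $C_n$ and apply $H_k=\log k+\gamma+\bo(1/k)$. Setting $p:=(I_n+1)/(n+1)$, this yields $\frac{n}{n+1}C_n(I_n+1)=C(p)+\bo(1/n)$ uniformly on $\{1\le I_n\le n-2\}$; the extreme cases $\{I_n\in\{0,n-1\}\}$ have total probability $\bo(1/n)$ after integrating the binomial mass $u^{n-1}+(1-u)^{n-1}$ against $du$, and contribute negligibly in $L_3$ since $|T_3|$ is uniformly bounded. It remains to bound $\|C(p)-C(U_1)\|_3$.

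The main obstacle is the logarithmic singularity of $C'(x)=2\log(x/(1-x))$ at $x\in\{0,1\}$. My plan is to condition on $U_1=u$ and split the conditional third moment of $C(p)-C(u)$ into a bulk contribution on $\{|p-u|\le u(1-u)/2\}$, on which $p\in[u/2,(u+1)/2]$ and a mean-value estimate with the local bound $|C'(t)|\le c_0(1+|\log u|+|\log(1-u)|)$ gives a conditional third moment of order $(1+|\log u|+|\log(1-u)|)^3(u(1-u)/n)^{3/2}$, plus a tail contribution controlled by Chernoff's inequality for $\mathrm{B}(n-1,u)$ together with the uniform boundedness of $C$ on $[0,1]$. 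The very narrow strips $u\in[0,(\log n)/n]\cup[1-(\log n)/n,1]$, where Chernoff is ineffective, are handled separately using $|C(x)-1|=\bo(x|\log x|+(1-x)|\log(1-x)|)$, which vanishes at the endpoints fast enough to produce an $\bo(n^{-3/2})$ contribution. Integrating over $u\in[0,1]$ using integrability of $|\log u|^k$ for every $k$ then gives $\|C(p)-C(U_1)\|_3^3=\bo(n^{-3/2})$, and hence $\|T_3\|_3=\bo(n^{-1/2})$, completing the plan.
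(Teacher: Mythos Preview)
Your overall architecture matches the paper's proof: the same three-way split via the triangle inequality, the same use of independence of $Y^{(0)},Y^{(1)}$ from $(U_1,I_n)$ for $T_1,T_2$, and for $T_3$ the same combination of ``$C_n\approx C$'' with a Chernoff/mean-value argument for $C(\cdot)$ evaluated at the empirical fraction versus at $U_1$. The paper routes $T_3$ through $C(I_n/(n-1))$ and cites R\"osler's uniform estimate $\|C_n(I_n{+}1)-C(I_n/(n-1))\|_\infty=\bo((\log n)/n)$, then splits $C$ into its $x\log x$ pieces and uses the one-sided event $\{B_{n-1,u}\ge u(n-1)/2\}$; you instead derive $C_n\approx C(p)$ directly and bound $|C'|$ on a two-sided bulk event. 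Both routes are legitimate.

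There is, however, a genuine quantitative gap in your handling of the extreme cases. You argue that on $\{I_n\in\{0,n-1\}\}$ the contribution is negligible because $|T_3|$ is uniformly bounded and $\Prob(I_n\in\{0,n-1\})=\bo(1/n)$. But that only yields $\E[|T_3|^3\mathbf{1}_{\{I_n\in\{0,n-1\}\}}]=\bo(1/n)$, hence a contribution of order $n^{-1/3}$ to $\|T_3\|_3$, which is \emph{not} $\bo(n^{-1/2})$. The repair is easy: either check that your error estimate $\tfrac{n}{n+1}C_n(I_n{+}1)=C(p)+\bo(1/n)$ in fact holds for \emph{all} $I_n\in\{0,\dots,n-1\}$ (a direct computation at $I_n=0$ gives $\tfrac{n}{n+1}C_n(1)-C(1/(n+1))=2(1-\gamma)/(n+1)+\bo(n^{-2})$, and symmetrically at $I_n=n-1$), or exploit that conditional on $I_n=0$ the law of $U_1$ is $\mathrm{Beta}(1,n)$, so $|T_3|=\bo((\log n)/n)$ there rather than merely bounded. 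The paper sidesteps this by using R\"osler's bound, which is uniform in $I_n$.

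A second, smaller point: with the standard multiplicative Chernoff constants, the tail probability outside your bulk event is only $\exp(-c\,nu)$ with $c$ of order $1/10$ or so, and at $u=(\log n)/n$ this is $n^{-c}$; integrating over $u\ge(\log n)/n$ gives a tail contribution of order $n^{-1-c}$ to $\E|C(p)-C(U_1)|^3$, again short of $n^{-3/2}$. Take the strip width to be $K(\log n)/n$ with $K$ large enough (depending on the Chernoff constant) so that the tail becomes $\bo(n^{-3/2})$; your strip estimate based on $|C(x)-1|=\bo(x|\log x|+(1-x)|\log(1-x)|)$ is robust to this enlargement.
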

\begin{proof}
We have  
\begin{align*}
\|b^{(n)}\|_3&\le  \frac{1}{\sigma(n)}\left(\left\|\left(\frac{I_n+1}{n+1} - U_1\right) Y^{(0)}\right\|_3+
\left\| \left(\frac{n-I_n}{n+1} -(1-U_1)\right) Y^{(1)}\right\|_3  \right.\\
&~\left. \quad\quad \quad\quad  +\left\|\frac{n}{n+1}C_n(I_n+1) -C(U_1) \right\|_3  \right)\\
&=:  \frac{1}{\sigma(n)}(S_1+S_2+S_3).
\end{align*}
Note that the summands $S_1$ and $S_2$  are equal. Moreover, we have that $(I_n,U_1)$ is independent of $Y^{(0)}$ and $Y^{(1)}$. Hence, we have
\begin{align*}
S_1+S_2= 2 \left\|\frac{I_n+1}{n+1} - U_1 \right\|_3 \|Y\|_3.
\end{align*}
By the Marcinkiewicz--Zygmund inequality, see, e.g.~Chow and Teicher \cite[p.~386]{chte97}, there exists a finite constant $M_3>0$ such that for all $u\in[0,1]$ we have
\begin{align}\label{mazy}
\E\left[|B_{n-1,u}-(n-1)u|^3\right]\le M_3 (n-1)^{3/2}.
\end{align}
Recall that conditionally on $U_1=u$ we have that $I_n$ is binomial $\mathrm{B}(n-1,u)$ distributed. The bound (\ref{mazy}) and integration hence imply  $\|(I_n+1)/(n+1) - U_1 \|_3 = \bo(1/\sqrt{n})$ and $S_1+S_2=\bo(1/\sqrt{n})$.

To bound the summand $S_3$ note that for $S_3= \bo(1/\sqrt{n})$ it is sufficient to show
$\|C_n(I_n+1) -C(U_1) \|_3= \bo(1/\sqrt{n})$. We have
\begin{align*}
\|C_n(I_n+1) -C(U_1) \|_3 \le \left\|C_n(I_n+1) -C\left(\frac{I_n}{n-1}\right) \right\|_3 + \left\|C\left(\frac{I_n}{n-1}\right) -C(U_1) \right\|_3.
\end{align*}
Note that we have  $\|C_n(I_n+1) -C(I_n/(n-1)) \|_3=\bo((\log n)/n)$ using Proposition 3.2 in R\"osler \cite{ro91}. Hence, it remains to bound $\|C(I_n/(n-1)) -C(U_1) \|_3.$ Using symmetry in the terms $x\log x$ and $(1-x)\log(1-x)$ appearing in $C(x)$ and the triangle inequality, we have
\begin{align}\label{est_t1}
\left\|C\left(\frac{I_n}{n-1}\right) -C(U_1) \right\|_3 \le 4\left\|\frac{I_n}{n-1} \log\left(\frac{I_n}{(n-1)U_1} \right)\right\|_3 + 4\left\|\left(\frac{I_n}{n-1}-U_1\right) \log U_1\right\|_3.
\end{align}
To bound the first summand in the latter display we  again use that conditional on $U_1=u$ the random variable $I_n$ has the Binomial B$(n-1,u)$ distribution. Hence
\begin{align}\label{re1}
\left\|\frac{I_n}{n-1} \log\left(\frac{I_n}{(n-1)U_1}\right) \right\|_3^3 = \int_0^1\E\left[ \left|\frac{B_{n-1,u}}{n-1} \log\left(\frac{B_{n-1,u}}{(n-1)u}\right)\right|^3\right]\,du.
\end{align}
To  bound the expectation appearing as integrand in the latter display we consider for $u\in(0,1)$ the  event 
\begin{align*}
E_u:=\left\{ B_{n-1,u}\ge \frac{u}{2}(n-1)\right\}.
\end{align*}
Note that for the complement $E_u^c$ of $E_u$, Chernoff's bound, see \cite{ch52} or \cite[Theorem 1.1]{mcdi98}, yields  $\Prob(E_u^c)\le \exp(-(n-1)u^2/2)$. We denote $h(x):=x\log x$ for $x\in[0,\infty)$. With $\sup_{x\in [0,1/2]}|h(x)| = 1/e\le 1 $ we bound the contribution on $E_u^c$ by
\begin{align}\label{re2}
\int_{E_u^c} \left|\frac{B_{n-1,u}}{n-1} \log\left(\frac{B_{n-1,u}}{(n-1)u}\right)\right|^3\, d\Prob &=
\int_{E_u^c} u^3\left|h\left(\frac{B_{n-1,u}}{(n-1)u}\right)\right|^3\, d\Prob   \nonumber\\
&\le  u^3 \exp\left(-\frac{(n-1)u^2}{2}\right). 
\end{align}
%On $E_3^u$ we bound with $0\le B_{n-1,u}/(n-1)\le 1$
%\begin{align*}
%\int_{E_3^u} \left|\frac{B_{n-1,u}}{n-1} \log\left(\frac{B_{n-1,u}}{(n-1)u}\right)\right|^3\, d\Prob \le |\log u|^3 \exp\left(-\frac{n-1}{8}\right).
%\end{align*}
On $E_u$ we apply the mean value theorem to $h(1+y)=h(1+y)-h(1)$ and obtain
\begin{align}\label{re3}
\lefteqn{\int_{E_u} \left|\frac{B_{n-1,u}}{n-1} \log\left(\frac{B_{n-1,u}}{(n-1)u}\right)\right|^3\, d\Prob } \nonumber\\
 &= \int_{E_u} u^3 \left|h\left(1+\frac{B_{n-1,u}-(n-1)u}{(n-1)u}\right)\right|^3\, d\Prob \nonumber \\
&\le \int_{E_u} u^3(1-\log u)^3\left|\frac{B_{n-1,u}-(n-1)u}{(n-1)u}\right|^3\, d\Prob.
\end{align}
With the Marcinkiewicz--Zygmund inequality (\ref{mazy}) we can further estimate the integral in (\ref{re3}) and obtain
\begin{align}\label{re3b}
\int_{E_u} \left|\frac{B_{n-1,u}}{n-1} \log\left(\frac{B_{n-1,u}}{(n-1)u}\right)\right|^3\, d\Prob 
 \le M_3\frac{(1-\log u)^3}{(n-1)^{3/2}}.
\end{align}
Hence, plugging (\ref{re2}) and (\ref{re3b}) into (\ref{re1}) we have
\begin{align}
\left\|\frac{I_n}{n-1} \log\left(\frac{I_n}{(n-1)U_1}\right) \right\|_3^3 
&\le \int_0^1 \left\{ u^3 \exp\left(-\frac{(n-1)u^2}{2}\right) +  M_3 \frac{(1-\log u)^3}{(n-1)^{3/2}} \right\} \,du \nonumber\\
&= \bo\left(\frac{1}{n^2}\right) + \bo\left(\frac{1}{n^{3/2}}\right)  \label{okamoto}\\
&= \bo\left(\frac{1}{n^{3/2}}\right). \nonumber 
\end{align}
The second summand in (\ref{est_t1}) is also estimated by use of the bound  (\ref{mazy}):
\begin{align*}
\left\|\left(\frac{I_n}{n-1}-U_1\right) \log U_1\right\|_3^3
&= \int_0^1 \E\left[\left| \frac{B_{n-1,u}}{n-1}-u\right|^3\right] |\log u|^3 \,du\\
&\le \int_0^1 M_3 \frac{|\log u|^3}{(n-1)^{3/2}}  \,du\\
&=\bo \left(\frac{1}{n^{3/2}}\right).
\end{align*}
Altogether, we have $S_3=\bo(1/\sqrt{n})$, hence
 $S_1+ S_2+S_3=\bo(1/\sqrt{n})$. Since  $\sigma(n)=\Omega(\sqrt{\log n}/\sqrt{n})$  the assertion follows.
\end{proof}
%\noindent
%{\bf Remark.} 
%Using Okamoto's inequality \cite{Ok58} to bound $\Prob(E_u^c)$ for $0<u\le 1/2$ in the previous proof leads to an improvement of the term $\bo(1/n^2)$ in (\ref{okamoto}) to $\bo(1/n^4)$.\\

Moreover the proof of Theorem \ref{main_thm}  in Section \ref{pfpf}  requires an initial estimate for the $L_3$-norm  $\|Y_n-Y\|_3$. Note that the following Lemma \ref{lem_l3} is improved later by proving Corollary \ref{coro}.
\begin{lem} \label{lem_l3}
For the error term $Y_n-Y$ in Theorem \ref{main_thm} we have, as $n\to\infty$,
\begin{align} \label{err_l3}
\|Y_n-Y\|_3 = \bo\left(\sqrt{\frac{\log n}{n}}\right).
\end{align}
\end{lem}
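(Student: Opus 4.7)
The plan is to establish the $L_4$-bound $\|Y_n - Y\|_4 = \bo(\sigma(n))$ by strong induction on $n$ using the recurrence (\ref{rec_bifi}), and then derive the stated $L_3$-bound by H\"older interpolation against the exact $L_2$-formula (\ref{var_asy_exp}). The detour through $L_4$ is essential: a direct $L_3$-induction fails because the effective Minkowski contraction constant is $2(\E[U_1^{3/2}])^{1/3} = 2(2/5)^{1/3} > 1$, which prevents closure. Switching to the \emph{even} moment $L_4$ restores a clean expansion of the recursive summand with no odd-order cross term.

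First I would split $Y_n - Y = D_n + R_n$, where
\begin{align*}
D_n := \frac{I_n+1}{n+1}(Y_{n,0} - Y^{(0)}) + \frac{n-I_n}{n+1}(Y_{n,1} - Y^{(1)})
\end{align*}
is the recursive part and $R_n$ gathers the three remaining summands of (\ref{rec_bifi}). The same Marcinkiewicz--Zygmund estimates used in the proof of Lemma \ref{bn_con}, applied with the $L_4$-norm in place of $L_3$, yield $\|R_n\|_4 = \bo(1/\sqrt{n})$. Next I would condition on $\mathcal{G} := \sigma(U_1, I_n)$: the joint distributional identity $(Y_{n,i}, Y^{(i)}) \mid I_n = k \stackrel{d}{=} (Y_k, Y)$ implies that the two summands of $D_n$ are conditionally independent and mean zero, with conditional second moments equal to $\sigma^2(I_n), \sigma^2(n-1-I_n)$ and conditional fourth moments equal to $\|Y_{I_n}-Y\|_4^4, \|Y_{n-1-I_n}-Y\|_4^4$. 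For conditionally independent mean-zero $V, W$ the expansion
\begin{align*}
\E[(V+W)^4 \mid \mathcal{G}] = \E[V^4 \mid \mathcal{G}] + 6\,\E[V^2 \mid \mathcal{G}]\,\E[W^2 \mid \mathcal{G}] + \E[W^4 \mid \mathcal{G}]
\end{align*}
has no odd-order cross term, which is the decisive gain over the $L_3$-setting.

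Substituting into this expansion and using $A_n \sigma(I_n) = A_0^{(n)} \sigma(n)$ together with the $L^1$-convergences $\E[(A_0^{(n)})^4] \to \E[U_1^2] = 1/3$ and $\E[(A_0^{(n)})^2(A_1^{(n)})^2] \to \E[U_1(1-U_1)] = 1/6$ (both following from (\ref{asy_norm})), under the induction hypothesis $\|Y_k - Y\|_4 \le L \sigma(k)$ for all $k < n$ one obtains
\begin{align*}
\|D_n\|_4^4 \le \sigma^4(n)\left(\frac{2L^4}{3} + 1 + o(1)\right).
\end{align*}
Since $(2L^4/3 + 1)^{1/4} < L$ strictly whenever $L > 3^{1/4}$, combining this with $\|R_n\|_4 = o(\sigma(n))$ via Minkowski closes the induction for all $n$ sufficiently large. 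Finitely many initial indices are absorbed by enlarging $L$, using $\sigma(k) > 0$ for all $k \ge 0$ and the finiteness $\|Y_k - Y\|_4 < \infty$ from $L^p$-martingale convergence. Finally, H\"older's inequality $\|f\|_3^3 \le \|f\|_2 \|f\|_4^2$, combined with the exact identity $\|Y_n - Y\|_2 = \sigma(n)$ and the just-established $L_4$-bound, yields $\|Y_n - Y\|_3 \le L^{2/3}\sigma(n) = \bo(\sqrt{\log n / n})$, as required.

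The main obstacle is recognising that the direct $L_3$-route is closed off by the bad Minkowski constant and that one must go through $L_4$ instead; once the $L_4$-induction is set up, the contractive factor $(2/3)^{1/4} < 1$ automatically leaves the needed room to close the recursion, and the rest is interpolation against a moment that is already known exactly.
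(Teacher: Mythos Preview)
Your argument is correct and complete, but it takes a genuinely different route from the paper, and your diagnosis of why a direct $L_3$-approach ``fails'' is mistaken.

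The paper works directly in $L_3$. It does \emph{not} apply Minkowski to $\|D_n\|_3$; instead it expands $\E[|X_n|^3]$ (with $X_n=(Y_n-Y)/\sigma(n)$) via $|X_n|\le \Lambda_0+\Lambda_1+|b^{(n)}|$ and handles the cross terms one by one. The key observation you missed is that, conditional on $I_n$, the mixed moments factor: e.g.\ $\E[\Lambda_0^2\Lambda_1]$ is bounded by a product of an $L_2$-moment and an $L_1$-moment of $X_j$, and these are already uniformly bounded by the exact $L_2$-result (\ref{var_asy_exp}). Thus the mixed terms contribute only $O(1)$, not a multiple of $\beta_{n-1}=\max_{j<n}\E[|X_j|^3]$, and the recursion closes with contraction factor $\E[U^{3/2}+(1-U)^{3/2}]=4/5<1$ on the \emph{third moment}, not on the norm. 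So the direct $L_3$-route is open; it just needs conditional independence rather than Minkowski.

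Your $L_4$-detour is a legitimate alternative: exploiting that odd cross terms vanish for conditionally independent centred summands gives the clean recursion $\|D_n\|_4^4\le (2L^4/3+1+o(1))\sigma^4(n)$, and the interpolation $\|f\|_3^3\le\|f\|_2\|f\|_4^2$ then recovers the $L_3$-bound. What your approach buys is a tidier bookkeeping of cross terms (only one survives, with a known coefficient); what the paper's approach buys is that it stays in $L_3$ throughout and avoids needing the $L_4$-version of Lemma~\ref{bn_con}. Both are short; neither is clearly preferable. Your claim that $\|R_n\|_4=O(n^{-1/2})$ does go through by the same Marcinkiewicz--Zygmund argument with exponent $4$, so there is no gap there.
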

\begin{proof}
Since $Y_n$ is a bounded random variable and $Y$ has finite absolute moments of arbitrary order we have $\|Y_n-Y\|_3<\infty$ for all $n\ge 0$.
Note that with $X_n$ defined in (\ref{scaling}) the assertion (\ref{err_l3}) is equivalent to $\E[|X_n|^3] =\bo(1)$. From (\ref{basic_eqn}) we obtain
\begin{align*}
|X_n|\le \Lambda_0 + \Lambda_1 + |b^{(n)}|
\end{align*}
with 
\begin{align*}
\Lambda_0:=A_0^{(n)} \frac{1}{\sigma(I_n)}\left|Y_{n,0}-Y^{(0)}\right|, \quad 
\Lambda_1:=A_1^{(n)}\frac{1}{\sigma(n-1-I_n)}\left|Y_{n,1}-Y^{(1)}\right|.
\end{align*}
Hence, we have for all $n\ge 1$ that 
\begin{align}
\E\left[|X_n|^3\right]&\le \E\left[\Lambda_0^3\right]+ \E\left[\Lambda_1^3\right] + \E\left[|b^{(n)}|^3\right]  + 3\E\left[\Lambda_0^2\Lambda_1\right]+
3\E\left[\Lambda_0\Lambda_1^2\right] \nonumber \\
&\quad ~ + 3\E\left[\Lambda_0^2|b^{(n)}|\right] + 3\E\left[\Lambda_0|b^{(n)}|^2\right]+  
3\E\left[\Lambda_1^2|b^{(n)}|\right] + 3\E\left[\Lambda_1|b^{(n)}|^2\right]   \label{mittel}\\
&\quad ~ + 6\E\left[\Lambda_0\Lambda_1|b^{(n)}|\right]. \nonumber
\end{align}
We use the notation 
\begin{align*}
\beta_n:= 1\vee \max_{0\le j\le n} \E\left[|X_j|^3\right].
\end{align*}
We start bounding the previous sum with the summand $\E[\Lambda_0^3]$. For all $0\le j\le n-1$, conditionally given $I_n=j$ we have that 
 $A_0^{(n)}$ is deterministic and $|Y_{n,0}-Y^{(0)}|/\sigma(I_n)$ is distributed as $|X_j|$. Hence we obtain 
\begin{align} \label{aaaa}
\E\left[\Lambda_0^3\right] \le \E\left[\left(A_0^{(n)}\right)^3\right] \beta_{n-1}
\end{align}
and an analogous bound for $\E[\Lambda_1^3]$. The summand $\E[|b^{(n)}|^3]$ tends to zero by Lemma \ref{bn_con}. For the summand $\E[\Lambda_0^2\Lambda_1]$ first note that again by conditioning on  $I_n=j$ we have independence of $|Y_{n,0}-Y^{(0)}|/\sigma(I_n)$ and $|Y_{n,1}-Y^{(1)}|/\sigma(n-1-I_n)$ with distributions of $|X_j|$ and $|X_{n-1-j}|$,  respectively. Since moreover $A_0^{(n)}$ and $A_1^{(n)}$ are uniformly bounded we obtain for an appropriate constant $0<D<\infty$ that
\begin{align*}
\E[\Lambda_0^2\Lambda_1] \le D \left(\max_{0\le j\le n-1} \|X_j\|_2^2 \right) \left(\max_{0\le j\le n-1} \|X_j\|_1 \right).
\end{align*}
Note that (\ref{var_asy_exp}) implies $\sup_{n\ge 0} \|X_n\|_2<\infty$, hence we have $\E[\Lambda_0^2\Lambda_1]=\bo(1)$. Analogously, $\E[\Lambda_0\Lambda_1^2]=\bo(1)$.
The summands in line (\ref{mittel}) are all bounded by H\"older's inequality, e.g., for  the first of these summands we have, also using (\ref{aaaa}), Lemma \ref{bn_con} and (\ref{asy_norm}), that for all $n$ sufficiently large
\begin{align*}
\E\left[\Lambda_0^2|b^{(n)}|\right]\le \|\Lambda_0\|_3^2 \|b^{(n)}\|_3 \le \beta_{n-1}^{2/3} \|b^{(n)}\|_3 \le  \beta_{n-1} \|b^{(n)}\|_3 = o(1)\beta_{n-1}.
\end{align*}
The other summands in line  (\ref{mittel}) yield the same contribution. Finally, we similarly have 
\begin{align*}
\E[\Lambda_0\Lambda_1|b^{(n)}|] \le \|\Lambda_0\|_3 \|\Lambda_1\|_3\|b^{(n)}\|_3= o(1)\beta_{n-1}.
\end{align*}
Collecting all terms we obtain 
\begin{align} \label{basic_ee}
\E\left[|X_n|^3\right]&\le \left(\E\left[\left(A_0^{(n)}\right)^3 + \left(A_1^{(n)}\right)^3\right] +o(1)\right) \beta_{n-1} + \bo(1).
\end{align}
With the asymptotic result (\ref{asy_norm}) this implies
\begin{align*}
\E\left[|X_n|^3\right]&\le \left(\E\left[U_1^{3/2} + (1-U_1)^{3/2}\right] +o(1)\right) \beta_{n-1} + \bo(1) = \left(\frac{4}{5} +o(1)\right) \beta_{n-1} + \bo(1).
\end{align*}
Hence, there exist an $n_0\in \Nset$ and a constant $0<D'<\infty$ such that for all $n\ge n_0$ we have 
\begin{align*}
\E\left[|X_n|^3\right]\le \frac{9}{10}\beta_{n-1} + D'.
\end{align*}
It is easy to check by induction that $\E\left[|X_n|^3\right]\le \beta_{n_0}\vee (10D')$ for all $n\ge 0$, hence
 $\E[|X_n|^3]=\bo(1)$, as $n\to\infty$.
\end{proof}

\noindent
{\bf Remark.} The argument of the proof of Lemma \ref{lem_l3} can be extended by induction  on $p$ to show, as $n\to\infty$,
\begin{align*} 
\|Y_n-Y\|_p = \bo\left(\sqrt{\frac{\log n}{n}}\right)
\end{align*}
for any $1\le p <\infty$. A related induction argument for a bound of the minimal $L_p$-metric $\ell_p(Y_n,Y)$ is given in Fill and Janson \cite[Section 3]{FJ02}.

\subsection{The proof of Theorem \ref{main_thm}}\label{pfpf}
We now prove Theorem \ref{main_thm} and Corollary \ref{coro}.
\begin{proof}[Proof of Theorem \ref{main_thm}]
We first define a ``hybrid" random variable to connect between $X_n$ and a standard normal random variable as follows: For ${\cal N}^{(0)}$ and ${\cal N}^{(1)}$ independent standard normal random variables also independent of all other random variables, i.e., independent of $(U_i)_{i\ge 1}$,
we set
\begin{align*}
Q_n:= A_0^{(n)} {\cal N}^{(0)}
+ A_1^{(n)} {\cal N}^{(1)}, \quad n\ge 1.
\end{align*}
Note that (\ref{asy_norm}) with $p=2$ implies that $\V(Q_n)\to 1$ as $n\to \infty$. Further, we have $\V(Q_n)>0$ for all $n\ge 1$.
Hence, there exists a (deterministic) sequence $(\kappa_n)_{n\ge 1}$ with $\kappa_n\to 0$ 
as $n\to \infty$ such that $\V((1+\kappa_n)Q_n)=1$ for all $n\ge 1$. Denoting 
by ${\cal N}$ another standard normal random variable we have that each 
pair from the three random variables $X_n$, $(1+\kappa_n)Q_n$ and ${\cal N}$ is $\zeta_3$-compatible. Thus, we can use the triangle inequality to obtain 
\begin{align}\label{tria}
\zeta_3(X_n,{\cal N}) \le \zeta_3(X_n,(1+\kappa_n)Q_n) + \zeta_3((1+\kappa_n)Q_n, {\cal N}).
\end{align}
For $n\ge 1$ we now introduce the abbreviations
\begin{align*}
Z_n^{(0)}:=\frac{1}{\sigma(I_n)}(Y_{n,0}-Y^{(0)}), \quad   Z_n^{(1)}:=\frac{1}{\sigma(n-1-I_n)}(Y_{n,1}-Y^{(1)})
\end{align*}
and
\begin{align*}
\Phi_n:=A^{(n)}_0 Z^{(0)}_n+A^{(n)}_1 Z^{(1)}_n.
\end{align*}
Then, Lemma  \ref{zolo_est} can be applied to the sums
\begin{align*}
X_n=\Phi_n+ b^{(n)},\quad  (1+\kappa_n)Q_n= Q_n+\kappa_n Q_n 
\end{align*}
and yields
\begin{align*}
 \zeta_3(X_n,(1+\kappa_n)Q_n) \le \zeta_3(\Phi_n, Q_n)&+\frac{1}{2}\|\Phi_n\|_3^2 \|b^{(n)}\|_3+\frac{1}{2} \|\Phi_n\|_3 \|b^{(n)}\|_3^2 + \frac{1}{6}\|b^{(n)}\|_3^3\\
&~+ \left(\frac{1}{2}|\kappa_n| + \frac{1}{2}\kappa_n^2 + \frac{1}{6} |\kappa_n|^3\right)\|Q_n\|_3^3.   
\end{align*}
Note that by definition of $Q_n$ we have $\sup_{n\ge 1} \|Q_n\|_3 <\infty$. Moreover, Lemma \ref{lem_l3} implies that  $\sup_{n\ge 1} \|\Phi_n\|_3 <\infty$.  
Hence, with $\kappa_n \to 0$ and, by Lemma \ref{bn_con}, $\|b^{(n)}\|_3 \to 0$ 
we obtain, as $n\to\infty$, 
\begin{align}\label{tria2}
 \zeta_3(X_n,(1+\kappa_n)Q_n) \le \zeta_3(\Phi_n, Q_n)+ o(1).
\end{align}
Next we show that for the second summand in (\ref{tria}) we  have $\zeta_3((1+\kappa_n)Q_n, {\cal N})=o(1)$: First note that $\sup_{n\ge 1} \|Q_n\|_3 <\infty$  implies that the $L_3$-norm of $(1+\kappa_n)Q_n$ is uniformly bounded in $n$. Hence, the bound (\ref{est_zeta_ell}) implies $\zeta_3((1+\kappa_n)Q_n, {\cal N})\le M \ell_3((1+\kappa_n)Q_n, {\cal N})$ for all $n\ge 0$ and a fininte constant $M>0$. Using the uniform $U_1$ in (\ref{asy_norm}) (that is also independent of ${\cal N}^{(0)}$ and ${\cal N}^{(1)}$) we have that  $\sqrt{U_1} {\cal N}^{(0)} + \sqrt{1-U_1} {\cal N}^{(1)}$ has also the standard normal distribution. Hence we obtain
\begin{align}
\zeta_3((1+\kappa_n)Q_n, {\cal N})&\le M \ell_3((1+\kappa_n)Q_n, {\cal N})\nonumber\\
&\le M\left\|  \left((1+\kappa_n) A_0^{(n)} -\sqrt{U_1}\right){\cal N}^{(0)} +  \left((1+\kappa_n) A_1^{(n)} -\sqrt{1-U_1}\right){\cal N}^{(1)} \right\|_3 \nonumber\\
&\to 0, \label{rrr}
\end{align}
by independence and (\ref{asy_norm}).

Hence,  we obtain
from (\ref{tria}), (\ref{tria2})  and (\ref{rrr}) that
\begin{align}\label{cone0}
\zeta_3(X_n,{\cal N}) \le \zeta_3(A^{(n)}_0 Z^{(0)}_n+A^{(n)}_1 Z^{(1)}_n, A^{(n)}_0 {\cal N}^{(0)}+A^{(n)}_1{\cal N}^{(1)}) +o(1).
\end{align}
Now, note that for all $0\le k\le n-1$, conditionally given $I_n=k$ we have that 
 $Z_n^{(0)}$ and $Z_n^{(1)}$ are independent with distributions of $X_k$ and $X_{n-1-k}$, respectively. By $(X^{(0)}_0,\ldots,X^{(0)}_{n-1})$, $(X^{(1)}_0,\ldots,X^{(1)}_{n-1})$ independent vectors with identical distribution $(X_0,\ldots,X_{n-1})$ are denoted. Thus, conditioning on $I_n$ and using that $\zeta_3$ is $(3,+)$-ideal and (\ref{sum_bound}), we obtain 
\begin{align} \label{cone1}
\lefteqn{\zeta_3(A^{(n)}_0 Z^{(0)}_n+A^{(n)}_1 Z^{(1)}_n, A^{(n)}_0 {\cal N}^{(0)}+A^{(n)}_1{\cal N}^{(1)}) } \nonumber \\
&\le \frac{1}{n}\sum_{k=0}^{n-1} \zeta_3\left(\frac{(k+1)\sigma(k)}{(n+1)\sigma(n)}X_k^{(0)} +\frac{(n-k)\sigma(n-1-k)}{(n+1)\sigma(n)} X_{n-1-k}^{(1)}, \right. \nonumber\\
&\left. \phantom{\frac{1}{n}\sum_{k=0}^{n-1} \zeta_3} \quad\quad
 \frac{(k+1)\sigma(k)}{(n+1)\sigma(n)} {\cal N}^{(0)}+\frac{(n-k)\sigma(n-1-k)}{(n+1)\sigma(n)}{\cal N}^{(1)}\right) \nonumber \\
 &\le  \frac{1}{n}\sum_{k=0}^{n-1}\left\{ \left(\frac{(k+1)\sigma(k)}{(n+1)\sigma(n)}\right)^3 \zeta_3(X_k,{\cal N}) + \left(\frac{(n-k)\sigma(n-1-k)}{(n+1)\sigma(n)}\right)^3\zeta_3(X_{n-1-k},{\cal N})\right\} \nonumber \\
 &=  \frac{1}{n}\sum_{k=0}^{n-1} 2 \left(\frac{(k+1)\sigma(k)}{(n+1)\sigma(n)}\right)^3 \zeta_3(X_k,{\cal N}).
\end{align}
With $\Delta(n):= \zeta_3(X_n,{\cal N})$ we obtain from (\ref{cone0}) and (\ref{cone1}) that 
\begin{align} \label{dist_e}
\Delta(n)\le \E\left[ 2\left(\frac{(I_n+1)\sigma(I_n)}{(n+1)\sigma(n)}\right)^3\Delta(I_n)\right]+o(1).
\end{align}
Now, a standard argument implies $\Delta(n)\to 0$ as follows: Note that $\sigma(n)\sim \sqrt{2\log(n)/n}$ and that $I_n$ is distributed uniformly on $\{0,\ldots,n-1\}$ imply for $U$ uniformly distributed on $[0,1]$ that
\begin{align} \label{cont_fac}
\E\left[ 2\left(\frac{(I_n+1)\sigma(I_n)}{(n+1)\sigma(n)}\right)^3\right] \to \E\left[2 U^{3/2}\right]=\frac{4}{5}<1.
\end{align}
First we use (\ref{dist_e}) for a rough  bound:
\begin{align*} 
\Delta(n)\le \E\left[ 2\left(\frac{(I_n+1)\sigma(I_n)}{(n+1)\sigma(n)}\right)^3\right]\sup_{0\le k\le n-1} \Delta(k)+o(1).
\end{align*}
In view of (\ref{cont_fac}) this implies, similarly to the last four lines of the proof of Lemma \ref{lem_l3}, that $(\Delta(n))_{n\ge 0}$ is bounded. We denote $\eta:=\sup_{n\ge 0} \Delta(n)<\infty$ and $\lambda :=\limsup_{n\to\infty} \Delta(n)\ge 0$. For any $\varepsilon>0$ there exists an $n_0 \ge 0$ such that $\Delta(n)\le \lambda + \varepsilon$ for all $n\ge n_0$. Hence, from (\ref{dist_e}) we obtain
\begin{align*}
\Delta(n)&\le \E\left[ {\bf 1}_{\{I_n<n_0\}}2\left(\frac{(I_n+1)\sigma(I_n)}{(n+1)\sigma(n)}\right)^3\right]\eta \\
&\quad ~+ \E\left[ {\bf 1}_{\{I_n\ge n_0\}}2\left(\frac{(I_n+1)\sigma(I_n)}{(n+1)\sigma(n)}\right)^3\right] (\lambda+\varepsilon) +  o(1).
\end{align*}
With $n\to \infty$ this implies
\begin{align}
\lambda = \limsup_{n\to\infty}\Delta(n)\le \frac{4}{5} (\lambda+\varepsilon).
\end{align}
Since $\varepsilon>0$ is arbitrary this implies $\lambda=0$. Hence, we have $\zeta_3(X_n,{\cal N})\to 0$ as $n\to \infty$. Since convergence in $\zeta_3$ implies weak convergence, the assertion follows.
\end{proof}

\begin{proof}[Proof of Corollary \ref{coro}]
Note that in the proof of Theorem \ref{main_thm} with $X_n=(Y_n-Y)/\sigma(n)$ the 
convergence $\zeta_3(X_n,{\cal N})\to 0$ is shown. This implies $\E[|X_n|^3] \to \E[|{\cal N}|^3]$ as $n\to \infty$, since the function $x\mapsto |x|^3/6$ is an element of ${\cal F}_3$. 
Hence we obtain 
\begin{align*}
\left\|Y_n - Y\right\|_3 =   \sigma(n)\|X_n\|_3   \sim  \sqrt{\frac{2\log n}{ n}}\|{\cal N}\|_3=
  \frac{2}{\pi^{1/6}} \sqrt{\frac{\log n}{ n}},
\end{align*}
the assertion.
\end{proof}
%\noindent
%{\bf Remark.} The techniques for the proof of Theorem \ref{main_thm} can be adapted slightly to also obtain $\zeta_{2+\alpha}(X_n,{\cal N})\to 0$ for all $\alpha \in (0,1]$. This implies for all $2<p\le 3$ the asymptotics 
%\begin{align*}
%\left\|Y_n - Y\right\|_{p}   \sim  \sqrt{\frac{2\log n}{ n}}\|{\cal N}\|_p \quad (n\to\infty).
%\end{align*}

\end{document}